\newtheorem*{theorem}{Theorem}
\newtheorem*{remark}{Remark}
\newcommand{\beq}{\begin{equation*}}
\newcommand{\eeq}{\end{equation*}}
\newcommand{\beqn}{\begin{equation}}
\newcommand{\eeqn}{\end{equation}}
\newcommand{\R}{\mathcal{R}_d}
\newcommand{\C}{\mathcal{C}}
\newcommand{\dd}{\mathrm{d}}
\newcommand{\T}{\mathcal{N}_{a,\,b}}
\newcommand{\EW}{\mathbb{E}}
\newcommand{\oc}{\overline{c}}
\begin{document}

\title{Buffon's problem with a pivot needle} 
\author{Uwe B\"asel}
\date{}
\maketitle

\begin{abstract}
\noindent In this paper, we solve Buffon's needle problem for a needle consisting of two line segments connected in a pivot point.
\\[0.2cm]
\textbf{2010 Mathematics Subject Classification:} 60D05, 52A22\\[0.2cm]
\textbf{Keywords:} Integral geometry, geometric probabilities, random convex sets, convex hull, hitting probabilities, intersection probabilities, Buffon's needle problem, pivot needle, elliptic integral
\end{abstract}

\section{Introduction}

The classical Buffon needle problem asks for the probability that a needle of length $\ell$ thrown at random onto a plane lattice $\mathcal{R}_d$ of parallel lines at a distance $d\geq\ell$ apart will hit one of these lines. This problem was stated and solved by Buffon in his {\em Essai d'Arithm\'etique Morale}, 1777 (see e.\:g. \cite[pp.\:71-72]{Santalo}, \cite[pp.\:501-502]{Seneta}). If an arbitrary convex body $\C$ with maximum width $\leq d$ is used in this experiment, then the hitting probability is given by $u/(\pi d)$, where $u$ denotes the perimeter of $\C$. This is the result of Barbier in 1860 \cite[pp.\:274-275]{Barbier}, \cite[p.\:507]{Seneta}. If $\C$ is a needle (line segment), then $u=2\ell$. If $\C$ is an ellipse, then there are elliptic integrals in the formulas of the hitting probabilities, see Duma and Stoka \cite{Duma_Stoka}.

We consider a needle $\T$ consisting of two line segments $C'A'$, $C'B'$ of lengths $a:=|C'A'|$ and $b:=|C'B'|$, connected in a pivot point~$C'$ (see Fig.\:\ref{fig1}), and assume $a+b\leq d$. The {\em random throw of $\T$ onto $\R$} is defined as follows: The $y$-coordinate of the point~$C'$  is a random variable uniformly distributed in $[0,d]$. The angles $\alpha$ and $\beta$ between the lines of $\R$, and segments $C'A'$ and $C'B'$, respectively, are random variables uniformly distributed in $[0,2\pi]$. All three random variables are stochastically independent.

\begin{figure}[h]
  \vspace{0cm}
  \begin{center}
    \includegraphics[scale=0.9]{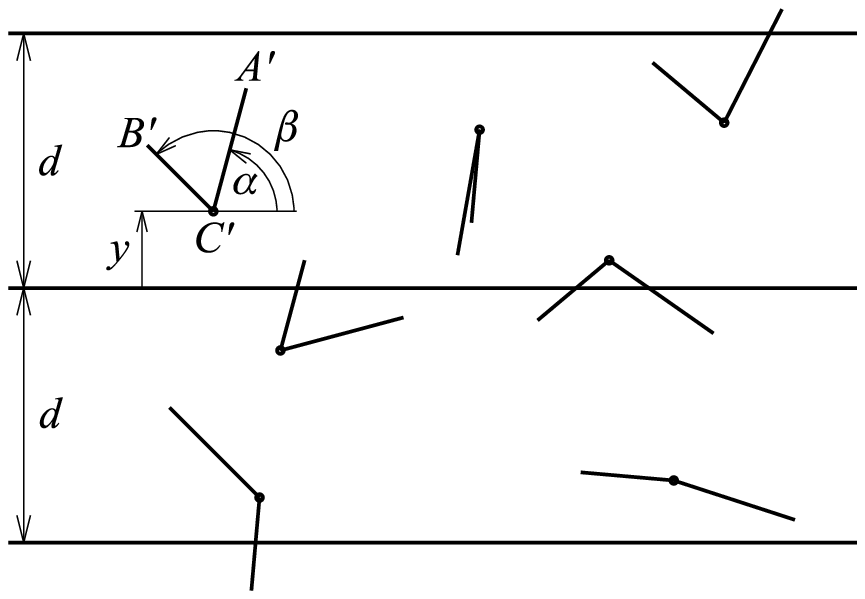}
  \end{center}
  \vspace{-0.5cm}
  \caption{\label{fig1} Lattice $\R$ and randomly thrown needle $\T$}
\end{figure}

The probability of the event that $\T$ hits two lines of $\R$ at the same time is equal to zero, even in the case $a+b=d$. The expectation $\EW(n)$ of the random variable $n=\;${\em number of intersection points between $\T$ and $\R$} is given by $\EW(n)=2(a+b)/(\pi d)$, cp.\:\cite{Ramaley}.

Here we are asking for the probabilities $p(i)$, $i\in\{0,1,2\}$, of the events that $\T$ hits $\R$ in exactly $i$ points. We denote by $A$ and $B$ the events that segments $C'A'$ and $C'B'$, respectively, hit one line of $\R$.    

\section{Hitting probabilities}

\begin{theorem}
If $a+b\leq d$, then the probabilities $p(i)$ that $\T$ hits $\R$ in exactly $i$ points are given by
\begin{align*}
  p(0) = {} & 1-\frac{(a+b)(\pi+2E(k))}{\pi^2 d}\,,\quad
  p(1) = \frac{4(a+b)E(k)}{\pi^2 d}\,,\\[0.15cm]
  p(2) = {} & \frac{(a+b)(\pi-2E(k))}{\pi^2 d}\,,
\end{align*}  
where
\beq
  E(k)=E(\pi/2,k)
	=\int_0^{\pi/2}\sqrt{1-k^2\sin^2\theta}\;\dd\theta
\eeq
is the complete elliptic integral of the second kind with $k^2=4ab/(a+b)^2$.
\end{theorem}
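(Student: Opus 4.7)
The plan is to pass to the convex hull of $\T$. With probability one this hull is the triangle $\Delta=C'A'B'$; since its diameter is at most $a+b\leq d$, $\Delta$ meets at most one line of $\R$, and such a line crosses $\partial\Delta$ in exactly two points. As the segment $A'B'$ can absorb at most one of these intersections, every line that hits $\Delta$ must also cross one of the segments $C'A'$, $C'B'$. Hence $\{\T\text{ misses }\R\}=\{\Delta\text{ misses }\R\}$, so
\beq
  p(0)=1-P(\Delta\text{ hits }\R).
\eeq

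First I would evaluate the right-hand side via Barbier's theorem. Conditioning on the relative angle $\phi:=\alpha-\beta$ fixes the shape of $\Delta$, whose third side then has length $c=\sqrt{a^2+b^2-2ab\cos\phi}$, while the remaining angle $\alpha$ is still uniformly distributed on $[0,2\pi]$, so $\Delta$ is a fixed convex body thrown with uniform random orientation. Barbier's formula $u/(\pi d)$ therefore applies conditionally (its diameter being $\leq d$), and averaging over $\phi$ yields
\beq
  P(\Delta\text{ hits }\R)=\frac{a+b+\EW(c)}{\pi d}\,.
\eeq

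The central step is then the evaluation of $\EW(c)$. Writing $\phi=2\theta$ and using the identity $a^2+b^2-2ab\cos 2\theta=(a+b)^2-4ab\cos^2\theta$ turns $c$ into $(a+b)\sqrt{1-k^2\cos^2\theta}$ with $k^2=4ab/(a+b)^2$. Exploiting periodicity and the substitution $\theta\mapsto\pi/2-\theta$ converts the resulting average into the standard form and gives $\EW(c)=2(a+b)E(k)/\pi$, which upon substitution above produces the stated formula for $p(0)$.

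To finish, I would combine this with the two linear relations $p(0)+p(1)+p(2)=1$ and $p(1)+2\,p(2)=\EW(n)=2(a+b)/(\pi d)$, the latter being the Ramaley identity recalled in the introduction and obtainable directly from classical Buffon applied separately to each segment by linearity of expectation. Solving this $2\times2$ system yields the claimed expressions for $p(1)$ and $p(2)$. The step requiring most care is the conditional application of Barbier's formula --- in particular, verifying that conditioning on $\phi$ leaves the orientation of $\Delta$ uniformly distributed on $[0,2\pi]$ --- together with the reduction of the integral for $\EW(c)$ to the complete elliptic integral $E(k)$.
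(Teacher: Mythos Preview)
Your argument is correct and follows essentially the same route as the paper: condition on the relative angle $\phi$, apply Barbier's perimeter formula to the triangular convex hull, and reduce the average of $c(\phi)=\sqrt{a^2+b^2-2ab\cos\phi}$ to $2(a+b)E(k)/\pi$. The only cosmetic difference is that the paper extracts $p(1)$ and $p(2)$ via inclusion--exclusion, $P(A\cap B)=P(A)+P(B)-P(A\cup B)$, whereas you solve the linear system using $\EW(n)=p(1)+2p(2)=2(a+b)/(\pi d)$; since $\EW(n)=P(A)+P(B)$ this is the same computation in different dress.
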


\begin{proof}
We observe that the angle $\phi:=\measuredangle(C'A',\,C'B')$ is a random variable uniformly distributed in $[0,2\pi]$. Due to the result of Barbier, the conditional probability $P(A\cup B\,|\,\phi)$ of $A\cup B$ for fixed value of $\phi\in[0,2\pi]$ is given by $u(\phi)/(\pi d)$, where $u(\phi)$ is the perimeter of the convex hull of $\T$. ($\T$ hits $\R$ if and only if its convex hull hits $\R$.) Using the law of total probability, the probability that $\T$ hits $\R$ is given by
\begin{align*}
  P(A\cup B) 
  = {} & \int_0^{2\pi}P(A\cup B\,|\,\phi)\:\frac{\dd\phi}{2\pi}
  = \frac{1}{2\pi^2d}\int_0^{2\pi}u(\phi)\,\dd\phi \nonumber\\[0.1cm] 
  = {} & \frac{1}{2\pi^2d}\int_0^{2\pi}\big[a+b+c(\phi)\big]\,\dd\phi
  = \frac{a+b+\oc}{\pi d}\,,	
\end{align*}
where $c:=|A'B'|$, and
\begin{align*}
  \oc
 := {} & \frac{1}{2\pi}\int_0^{2\pi}c(\phi)\,\dd\phi
  = \frac{1}{2\pi}\int_0^{2\pi}\sqrt{a^2+b^2-2ab\cos\phi}\;\dd\phi\,.
\end{align*}
Using $\cos\phi=2\cos^2(\phi/2)-1$, we have
\begin{align*}
  \oc
  = {} & \frac{1}{2\pi}\int_0^{2\pi}\sqrt{(a+b)^2-4ab\cos^2\frac{\phi}{2}}\;\,\dd\phi\\[0.1cm]
  = {} & \frac{a+b}{2\pi}\int_0^{2\pi}\sqrt{1-\frac{4ab}{(a+b)^2}\cos^2\frac{\phi}{2}}\;\,\dd\phi\,.
\end{align*}
For abbreviation we put $k^2=4ab/(a+b)^2$. From the inequality $\sqrt{ab}\leq(a+b)/2$ between the geometric and the arithmetic mean, one finds $k^2\leq 1$, hence $0\leq k\leq 1$ with $k=1$ only for $a=b$. With the substitution $\chi=\phi/2$ we get
\begin{align*}
  \oc
	= {} & \frac{a+b}{\pi}\int_0^{\pi}\sqrt{1-k^2\cos^2\chi}\;\dd\chi
	= \frac{2(a+b)}{\pi}\int_0^{\pi/2}\sqrt{1-k^2\cos^2\chi}\;\dd\chi\\
	= {} & \frac{2(a+b)}{\pi}\int_0^{\pi/2}\sqrt{1-k^2\sin^2\chi}\;\dd\chi
	= \frac{2(a+b)E(k)}{\pi}\,.
\end{align*}
It follows that
\begin{align*}
  P(A\cup B)
	= {} & \frac{a+b+\oc}{\pi d} 
	= \frac{(a+b)(\pi+2E(k))}{\pi^2 d}\,,\\[0.15cm]
  P(A\cap B) 
	= {} & P(A)+P(B)-P(A\cup B)
	= \frac{2a}{\pi d}+\frac{2b}{\pi d}-\frac{a+b+\oc}{\pi d}\\
	= {} & \frac{a+b-\oc}{\pi d}
	= \frac{(a+b)(\pi-2E(k))}{\pi^2 d}\,,
\end{align*}
and
\begin{align*}
  p(0) = {} & 1-P(A\cup B) 
       = 1-\frac{(a+b)(\pi+2E(k))}{\pi^2 d}\,,\\[0.15cm]
  p(1) = {} & P(A\cup B)-P(A\cap B) = \frac{a+b+\oc}{\pi d}
			-\frac{a+b-\oc}{\pi d} = \frac{2\,\oc}{\pi d}\\
       = {} & \frac{4(a+b)E(k)}{\pi^2 d}\,,\\[0.15cm]
  p(2) = {} & P(A\cap B) = \frac{(a+b)(\pi-2E(k))}{\pi^2 d}\,.
\qedhere
\end{align*}
\end{proof}
\noindent This is the result from \cite[pp.\:57-58]{Baesel}. There it was obtained as special case of the more general result in Corollary 4.2 \cite[p.\:56]{Baesel}. 

\begin{remark}
{\em If the angle $\phi$ is constant, then we have
\beq
  P(A\cup B)=\frac{a+b+c}{\pi d} \quad\mbox{and}\quad
  P(A\cap B)=\frac{a+b-c}{\pi d}
\eeq
with $c=\sqrt{a^2+b^2-2ab\cos\phi}$\,. This yields
\beq
  p(0)=1-\frac{a+b+c}{\pi d} \,,\quad p(1)=\frac{2c}{\pi d} \,,\quad
  p(2)=\frac{a+b-c}{\pi d},
\eeq
see Santal\'o \cite[pp.\:77-78]{Santalo}.}
\end{remark}

\section{Special cases}
If $a=b$, we have $k=1$, $E(1)=1$, and therefore
\beq
  p(0) = 1-\frac{2a(\pi+2)}{\pi^2 d} \,,\quad 
  p(1) = \frac{8a}{\pi^2 d} \,,\quad
  p(2) = \frac{2a(\pi-2)}{\pi^2 d}\,.
\eeq
If $a\not=0$ and $b=0$, then $k=0$ and $E(0)=\pi/2$, and therefore $P(A\cup B)=P(A)=2a/(\pi d)$. This is the result of the classical Buffon needle problem.

\bigskip
\begin{center}
Uwe B\"asel\\[0.15cm]
HTWK Leipzig,\\
Fakult\"at f\"ur Maschinenbau und Energietechnik,\\ 
PF 30 11 66, 04251 Leipzig, Germany\\[0.15cm]
{\small uwe.baesel@htwk-leipzig.de}
\end{center}

\end{document}